\theoremstyle{plain}
\newtheorem{defi}{Definition}[section]
\newtheorem{teo}[defi]{Theorem}
\newtheorem{lema}[defi]{Lemma}
\newtheorem{remark}[defi]{Remark}
\theoremstyle{definition}
\theoremstyle{remark}
\numberwithin{equation}{section}
\begin{document}

\title[]{
A regularity result for a class of non-uniformly elliptic operators}

\author[]{Fausto Ferrari}
\address{\noindent
Fausto Ferrari: Dipartimento di Matematica, Universit\`a di Bologna,
Piazza di Porta S.Donato 5, Bologna, Italy.
\newline \textit{Email address:} {\tt fausto.ferrari@unibo.it}
}

\author[]{Giulio Galise}
\address{Giulio Galise:
Dipartimento di Matematica Guido Castelnuovo, Sapienza Universit\`a di Roma,
Piazzale Aldo Moro 5, Roma, Italy.
\newline \textit{Email address:} {\tt galise@mat.uniroma1.it}
}

\thanks{F. F. was partially supported by INDAM-GNAMPA 2019 project: {\it Propriet\`a di regolarit\`a delle soluzioni viscose con applicazioni a problemi di frontiera libera}  and INDAM-GNAMPA 2020 project: {\it Metodi di viscosit\`a e applicazioni a problemi non lineari con debole ellitticit\`a
.}}
\thanks{G. G. was partially supported by INDAM-GNAMPA 2019 project: {\it Problemi differenziali per operatori fully nonlinear fortemente degeneri}  and INDAM-GNAMPA 2020 project: {\it Problemi asintotici per EDP nonlineari e Mean Field Games.}}
\keywords{H\"older regularity of viscosity solutions, Theorem's on Sums, Ishii-Lions methods, Fully nonlinear equations}

\keywords{Viscosity solutions, fully nonlinear partial differential equations, non-uniformly elliptic operators. \\
\indent 2010 {\it Mathematics Subject Classification.} 35J60, 35B65, 35D40.}

\date{\today}

\begin{abstract} 
We obtain an explicit H\"older regularity result for viscosity solutions of a class of second order fully nonlinear equations  leaded by operator that are neither convex/concave nor uniformly elliptic.
\end{abstract}

\maketitle

\section{Introduction}
This note deals with the H\"older continuity issue of solutions of degenerate elliptic equations
of the form 
\begin{equation}\label{equa}
\mathcal{M}_{\mathbf{a}}(D^2u)=f(x)\quad\;\text{in $\Omega$,}
\end{equation}
where $\Omega\subset \mathbb{R}^N$ is a domain, $f\in C(\Omega)$ and $\mathcal{M}_{\mathbf{a}}$ is the weighted partial trace operator defined, for any symmetric matrix $X$, by the formula
\begin{equation}\label{0102eq1}
\mathcal{M}_{\mathbf{a}}(X)=\sum_{i=1}^Na_i\lambda_i(X).
\end{equation}
In \eqref{0102eq1}, $\lambda_1(X)\leq\ldots\leq\lambda_N(X)$ are the ordered eigenvalues of $X\in\mathbb S^N$ and $\mathbf{a}=\left\{a_1,\ldots,a_N\right\}$ is such that  $a_i\geq0$ for any $i=1,\ldots,N$. \\ It is plain that $\mathcal{M}_{\mathbf{a}}$ reduces to the classical Laplace operator when $\mathbf{a}=\left\{1,\ldots,1\right\}$ and that it fall out the class of uniformly elliptic operators as soon as $a_i=0$ for some $i=1,\ldots,N$. Such operators include, as particular cases, significant examples of degenerate operators, for instance   
$$
\mathcal{P}^-_k(X)=\sum_{i=1}^k\lambda_i(X)\quad\;\text{and}\quad\; \mathcal{P}^+_k(X)=\sum_{i=1}^k\lambda_{N-k+i}(X),
$$
which arise in the study of various geometric and elliptic problems, see e.g. \cite{AS, BGI, CDLV, CLN, CLN2, HMW,HL,HL2,S,W}, as well as the operators $\lambda_k(X)$, for some $k\in\left\{1,\ldots,N\right\}$, whose interest has been developed in the framework of differential games theory, see \cite{BER,BR,BCQ,BL}. 

In \cite{FV} the authors studied qualitative properties  of solutions of \eqref{equa} under the further assumption that $a_1>0$ and $a_N>0$, having in mind as prototype the Isaacs operator
\begin{equation}\label{Isaacs}
\lambda_1(X)+\lambda_N(X)=\min_{|\xi|=1}\max_{|\eta|=1}\left(\left\langle X\xi,\xi\right\rangle+\left\langle X\eta,\eta\right\rangle\right),
\end{equation}
which is neither uniformly elliptic nor convex/concave (in dimension $N\geq3$). Among  other results, they in particular obtained an Alexandov-Bakelman-Pucci (ABP) type inequality following the scheme of the proof showed in \cite{CC},  starting from the fact  (see \cite[Section 2.2]{CC} for the notation) that
\begin{equation}\label{CaffCab}
\mathcal{M}_{\mathbf{a}}\in \underline{\mathcal{S}}(\frac{a_*}{N},|\mathbf{a}|_1,f)\cap \overline{\mathcal{S}}(\frac{a_*}{N},|\mathbf{a}|_1,f),
\end{equation}
where $a_*=\min\left\{a_1,a_N\right\}$ and $|\mathbf{a}|_1=a_1+\cdots+a_N$. As a byproduct they obtain, in the same way of the uniformly elliptic case, that viscosity solutions of \eqref{equa} are $C_{\text{loc}}^{0,\alpha}(\Omega)$, where $\alpha\in(0,1)$, which is not explicitly known,  depends on the constant that appears in the ABP estimate. They did not obtain any further result about a possible lower bound on $\alpha$ or, possibly, a sharper result about the regularity of solutions due to the lack of structure in the nonlinear equation.

The goal of this note is to provide an explicit lower bound for $\alpha$, only depending  on $a_1$ and $a_N$. Applying the Ishii-Lions  approach to the problem (see \cite{IL}), we manage to prove that viscosity solutions of \eqref{equa} are $C^{0,\beta}_{\text{loc}}(\Omega)$, where 
\begin{equation}\label{beta}
\beta=1-\frac{a_1+a_N}{\left(\sqrt{a_1}+\sqrt{a_N}\right)^2}\,.
\end{equation}
From this we infer that $\alpha\geq\beta$ and, concerning the main example \eqref{Isaacs}, we in particular obtain that $\alpha\geq \frac{1}{2}$.\\
It is worth to point out that the fundamental assumption in the strategy of Ishii-Lions, in order to prove the Lipschitz continuity of solutions, is the uniformly ellipticity of the equation which clearly is outside our setting. Nevertheless, using the assumption $a_1>0$ and $a_N>0$, we are still able to detect some useful information encoded in the structure of the operator, so leading to the $\beta$-regularity of solutions, where $\beta$, defined in \eqref{beta}, is strictly less than one.\\
In addition, this approach can be applied to a larger class of operators depending to the first order term as well. \\
Thus, for stating our main result, we introduce the class of the equations we are going to consider.
Let
\begin{equation}\label{mainequation}
\mathcal{M}_{\mathbf{a}}(D^2u)+H(\nabla u)=f(x)\quad\;\text{in $\Omega$},
\end{equation}
where $\Omega\subseteq\mathbb R^N$ is a domain, $f$ is continuous in $\Omega$ and $
\mathcal{M}_{\mathbf{a}}$ is the fully nonlinear operator that we have introduced in \eqref{0102eq1}.\\
 Our  assumptions are:
\begin{enumerate}[label=(H\arabic*)]
\item\label{H1} $\mathcal{M}_{\mathbf{a}}\in\mathcal A=\{\mathcal{M}_{\mathbf{a}}(X):=\sum_{i=1}^Na_i\lambda_i(X):\:\: a_i\geq0,\: i=1,\dots,N,\:\: a_1>0,\:a_N>0,\:X\in\mathbb S^N \}.$
\item\label{H2} $H\in C(\mathbb R^N)$ and there exists a nonnegative constant $C_H$ such that 
\begin{equation}\label{assumptionH2}
|H(p+q)-H(p)|\leq C_H(1+|p|+|q|)|q|
\end{equation}
 for every $p,q\in\mathbb R^N$.
\end{enumerate}
 A typical example of $H$ satisfying \eqref{assumptionH2} is  $H(p)=A|p|^2+B|p|^\tau$, where $\tau\in[0,2]$ and $A,B\in\mathbb R$.  Although we shall allow  $H$ to have a  quadratic growth in the gradient variable, the prototype equation to be kept in mind it is still the one obtained when $H\equiv0$, e.g.
$$
a_1\lambda_1(D^2u)+a_N\lambda_N(D^2u)=f(x)\quad\;\text{in $\Omega$},
$$
 with $a_1,a_N>0$.

Now, we are in position to state our main result.
 \begin{teo}\label{teo}
Let $u\in C(\Omega)$ be a viscosity solution of \eqref{mainequation}. If \ref{H1}-\ref{H2} hold, then 
$$u\in C_{\text{loc}}^{0,\beta}(\Omega)\quad,\qquad\beta=1-\frac{a_1+a_N}{\left(\sqrt{a_1}+\sqrt{a_N}\right)^2}
  $$    
and the following estimate holds: for any $\omega\subset\subset\omega'\subset\subset\Omega$ one has  
\begin{equation*}
\left\|u\right\|_{C^{0,\beta}(\omega)}\leq C=C\left(a_1,a_N,\text{dist}(\omega,\omega'), C_H, \left\|u\right\|_{L^\infty(\omega')},\left\|f\right\|_{L^\infty(\omega')}\right).
\end{equation*}
\end{teo}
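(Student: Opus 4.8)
The plan is to follow the Ishii--Lions method adapted to the partially degenerate operator $\mathcal M_{\mathbf a}$. Fix concentric balls $B_{2r}\subset\subset\omega'$ and work inside a ball $B_r=B_r(z_0)$. One wants to show that for a suitable constant $L$ (to be chosen large) and a small parameter, the function
\[
\Phi(x,y)=u(x)-u(y)-L\,\phi(|x-y|)-\frac{M}{2}\bigl(|x-z_0|^2+|y-z_0|^2\bigr)
\]
is nonpositive on $\overline{B_r}\times\overline{B_r}$, where $\phi(s)=s^\beta$ (modified near $s=0$ so that $\phi$ is $C^2$ and concave, e.g. $\phi(s)=s^\beta-\gamma s$ on $[0,s_0]$), $M$ is chosen so that the quadratic penalization controls boundary terms thanks to $\|u\|_{L^\infty(\omega')}$. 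Arguing by contradiction, assume the maximum of $\Phi$ is positive and attained at an interior point $(\bar x,\bar y)$ with $\bar x\neq\bar y$.

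**Key steps.** First, apply the Theorem on Sums (Ishii's lemma) to produce symmetric matrices $X,Y\in\mathbb S^N$ with $(p_x,X)\in\overline J^{2,+}u(\bar x)$, $(p_y,Y)\in\overline J^{2,-}u(\bar y)$, where $p_x=L\phi'(|\bar x-\bar y|)\hat e+M(\bar x-z_0)$, $p_y=L\phi'(|\bar x-\bar y|)\hat e-M(\bar y-z_0)$ with $\hat e=(\bar x-\bar y)/|\bar x-\bar y|$, and the matrix inequality
\[
\begin{pmatrix}X&0\\0&-Y\end{pmatrix}\le \mathfrak B+\varepsilon\mathfrak B^2,\qquad \mathfrak B=L\,D^2\psi(\bar x-\bar y)+M\,I_{2N},
\]
with $\psi(\cdot)=\phi(|\cdot|)$. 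Using the viscosity inequalities and hypothesis \ref{H2} to absorb the first-order term (the gradient is of size $\sim L|\phi'|+Mr$, and $|H(p_x)-H(p_y)|\le C_H(1+|p_x|+|p_y|)|p_x-p_y|$ controls the difference), one reduces to estimating $\mathcal M_{\mathbf a}(X)-\mathcal M_{\mathbf a}(Y)$ from below along the "good direction'' $\hat e$. The crucial point is that $D^2\psi(\bar x-\bar y)$, restricted to the direction $\hat e$, equals $\phi''(|\bar x-\bar y|)<0$ (strict concavity), while in the orthogonal directions it is $\phi'(|\bar x-\bar y|)/|\bar x-\bar y|>0$ but small. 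Testing the matrix inequality on vectors of the form $(\hat e,-\hat e)$ forces the relevant eigenvalues of $X-Y$ to have a large negative contribution in the $\hat e$-direction. Here is where only $a_1>0$ and $a_N>0$ are needed: one shows that $\mathcal M_{\mathbf a}(X)-\mathcal M_{\mathbf a}(Y)\le a_1\lambda_N(X-Y)+a_N\lambda_1(X-Y)$ or a similar bound exploiting $\lambda_i(X)-\lambda_j(Y)$ inequalities, and picks out the eigenvalue pair carrying the concavity of $\phi$ in the $\hat e$-direction, so that this quantity is bounded above by roughly $-c\,L|\phi''(|\bar x-\bar y|)|+C L\phi'(|\bar x-\bar y|)/|\bar x-\bar y|+CM$.

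**The decisive computation and the obstacle.** Plugging $\phi(s)=s^\beta$ gives $\phi''(s)/(\phi'(s)/s)=\beta-1$, so the ratio of the "bad'' to the "good'' second-order term is exactly controlled by $1-\beta$. The choice of $\beta$ in \eqref{beta} is precisely the threshold that makes the coefficient $a_1\cdot(\text{good in }\hat e)+a_N\cdot(\text{bad in }\hat e)$ — or the optimal arrangement of the two positive weights against the concave and convex parts of $D^2\psi$ — have the correct sign: one needs $a_1\,\frac{\phi'(t)}{t}+a_N\,\phi''(t)<0$ (up to the symmetric choice with $a_1\leftrightarrow a_N$), i.e. $a_N(1-\beta)>a_1$ after normalizing, and optimizing the free direction-splitting yields the value $\beta=1-(a_1+a_N)/(\sqrt{a_1}+\sqrt{a_N})^2$; equivalently $1-\beta=(a_1+a_N)/(\sqrt{a_1}+\sqrt{a_N})^2$, which balances $\sqrt{a_1/a_N}$ against $\sqrt{a_N/a_1}$ in the two eigenvalue slots. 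One then chooses $L$ large (depending on $\|u\|_\infty$, $\|f\|_\infty$, $C_H$, $a_1$, $a_N$, $r$) so that the negative principal term dominates all the error terms ($M$-terms, the $\varepsilon\mathfrak B^2$ correction, the first-order $C_H$ terms, and $\|f\|_\infty$), producing $0\le \Phi(\bar x,\bar y)\le$ (strictly negative quantity), a contradiction. This yields $|u(x)-u(y)|\le L|x-y|^\beta$ for $x,y\in B_{r/2}$, and a covering argument gives the stated local estimate. I expect the main obstacle to be the linear-algebra step: extracting from the single matrix inequality $\mathrm{diag}(X,-Y)\le \mathfrak B+\varepsilon\mathfrak B^2$ the right upper bound on $\mathcal M_{\mathbf a}(X)-\mathcal M_{\mathbf a}(Y)$, since $\mathcal M_{\mathbf a}$ is a nonlinear function of the ordered eigenvalues; one must carefully pair eigenvalues (using that $\lambda_i(X)\le \lambda_i(Y)+\lambda_i$-type estimates and Weyl's inequalities, together with the sign information on the eigenvectors of $\mathfrak B$) so that the weights $a_1$ and $a_N$ multiply exactly the eigenvalues inheriting $\phi''<0$ and $\phi'/t>0$ in the optimal proportion — everything else (the gradient absorption via \ref{H2}, the boundary penalization, the covering) is routine once this is in place.
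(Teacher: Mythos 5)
Your overall scaffolding matches the paper's: localize, penalize with a quadratic term, use a concave radial function of $|x-y|$ of H\"older type, apply the Jensen--Ishii theorem on sums, absorb the Hamiltonian via \ref{H2}, and reach a contradiction. (Minor cosmetic difference: the paper takes $\varphi$ as the solution of an ODE $\varphi''+(\tfrac{A}{r}+B)\varphi'=-C$ with $A=1-\beta$, rather than a truncated power $s^\beta$, but for $H\equiv0$ it reduces to $Ar^\beta-Br^2$, so this is not an essential divergence.)

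However, the step you yourself flag as ``the main obstacle'' is precisely the heart of the proof, and what you sketch for it does not work. Your proposed reduction $\mathcal{M}_{\mathbf a}(X)-\mathcal{M}_{\mathbf a}(Y)\leq a_1\lambda_N(X-Y)+a_N\lambda_1(X-Y)$ (or Weyl-type pairings of eigenvalues of $X-Y$) is not a valid inequality in general, and your stated sign condition $a_1\frac{\phi'(t)}{t}+a_N\phi''(t)<0$, i.e.\ $a_N(1-\beta)>a_1$, gives the threshold $\beta<1-a_1/a_N$, which is not the exponent in the theorem and is even vacuous when $a_1\geq a_N$; the phrase ``optimizing the free direction-splitting'' does not supply the missing mechanism. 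The paper never passes through $X-Y$: it keeps only the extreme terms (the middle ones are discarded using $X_\varepsilon\leq Y_\varepsilon$ and $a_i\geq0$) and tests the full $2N\times2N$ inequality with \emph{asymmetrically scaled} vectors. Taking $v=\sqrt{a_1}\,\hat e$, $w=-\sqrt{a_N}\,\hat e$ with $\hat e=(\hat x-\hat y)/|\hat x-\hat y|$ gives
$a_1\lambda_1(X_\varepsilon)-a_N\lambda_N(Y_\varepsilon)\leq(\sqrt{a_1}+\sqrt{a_N})^2\varphi''$ (up to the $O(\varepsilon)$ correction), because the right-hand side is $\langle\Theta_\varepsilon(v-w),v-w\rangle$ and $|v-w|^2=(\sqrt{a_1}+\sqrt{a_N})^2$; then $a_N\lambda_N(X_\varepsilon)$ and $-a_1\lambda_1(Y_\varepsilon)$ are bounded separately, testing with $(\sqrt{a_N}\,\xi,0)$ and $(0,\sqrt{a_1}\,\eta)$ for suitable unit eigenvectors, each costing only $\varphi'/|\hat x-\hat y|$ times $a_N$, resp.\ $a_1$. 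The resulting balance $(\sqrt{a_1}+\sqrt{a_N})^2\varphi''+(a_1+a_N)\frac{\varphi'}{r}\leq -C$ is exactly what forces $1-\beta=(a_1+a_N)/(\sqrt{a_1}+\sqrt{a_N})^2$; note in particular that the symmetric choice $(\hat e,-\hat e)$ you mention cannot produce the constant $(\sqrt{a_1}+\sqrt{a_N})^2$, and without this asymmetric pairing of $a_1\lambda_1(X_\varepsilon)$ against $a_N\lambda_N(Y_\varepsilon)$ the claimed exponent is not reached. Since this linear-algebra step is missing (and the substitutes offered are incorrect), the proposal has a genuine gap.
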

The main consequence of Theorem \ref{teo} is a lower bound of the expected regularity of viscosity solutions to a large class of operators that are not uniformly elliptic.  We point out that very few results are known about the sharp regularity of solution to fully nonlinear equations that are not convex/concave and that are not uniformly elliptic. In particular we recall the fundamental result \cite{NV}. Concerning the regularity issues of viscosity solutions of degenerate equations closely related to ours, we refer to \cite{CDLP, Fe, FeVe,FV}.

We conclude this introduction by pointing out that if we drop the assumption \ref{H1}, in the sense that $a_1=a_N=0$, then there exist viscosity solutions of 
$$
a_2\lambda_2(D^2u)+\ldots+a_{N-1}\lambda_{N-1}(D^2u)=0\quad\text{in $B_1$},
$$
 which doesn't belong to any $C_{\text{loc}}^{0,\alpha}(B_1)$ for  $\alpha\in (0,1)$, even if $a_i>0$ for any $i=2,\ldots,N-1$. We present a simple example at the end of this note.

\section{H\"older regularity}

We start with the following elementary lemma.

\begin{lema}\label{lema}
Let $\delta>0$ and let $A,B,C,D$ be nonnegative constants such that $A\in(0,1)$ and $C>0$. Then there exists $\varphi\in C^2((0,\delta])\cap C\left([0,\delta]\right)$, depending on $A,B,C,D,\delta$, which is a positive solution  of 
\begin{equation}\label{0102eq2}
\varphi''(r)+\left(\frac{A}{r}+B\right)\varphi'(r)=-C\qquad r\in(0,\delta]
\end{equation}
and satisfies the following conditions:
\begin{align}
&\varphi'(r)>0 \quad\text{and}\quad\varphi''(r)<0\quad\text{for any}\quad r\in(0,\delta]\label{1}\\
&\varphi''(r)-\frac{\varphi'(r)}{r}\leq-C\quad\text{for any}\quad r\in(0,\delta]\label{2}\\
&\varphi(\delta)\geq D\label{3}\\
&\sup_{0<r\leq\delta}\frac{\varphi(r)}{r^{1-A}}<+\infty\label{4}.
\end{align}
\end{lema}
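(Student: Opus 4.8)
The plan is to solve the linear ODE \eqref{0102eq2} explicitly and then tune the free constants so that all four conditions hold. Writing $\psi=\varphi'$, the equation becomes a first order linear ODE $\psi'+\left(\frac{A}{r}+B\right)\psi=-C$, whose integrating factor is $\mu(r)=r^A e^{Br}$. Hence $\bigl(r^A e^{Br}\psi(r)\bigr)'=-C\,r^A e^{Br}$, and integrating from $0$ (the integrand $r^A e^{Br}$ is integrable near $0$ since $A>-1$) gives
\[
\varphi'(r)=\psi(r)=\frac{1}{r^A e^{Br}}\left(k-C\int_0^r s^A e^{Bs}\,ds\right)
\]
for a constant $k>0$ to be chosen. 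Choosing $k>0$ forces $\varphi'(r)>0$ for $r$ in a right-neighbourhood of $0$; to get positivity on all of $(0,\delta]$ one simply takes $k$ large enough that $k>C\int_0^\delta s^A e^{Bs}\,ds$. Then $\varphi$ is defined by integrating once more, $\varphi(r)=\varphi(0^+)+\int_0^r\psi(t)\,dt$, and we are free to add any nonnegative constant, which we will use for \eqref{3}.

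Next I would verify the pointwise inequalities. For \eqref{1}: $\varphi'>0$ holds by the choice of $k$ above, and $\varphi''(r)=-C-\left(\frac{A}{r}+B\right)\varphi'(r)<0$ follows immediately from the ODE since the bracket and $\varphi'$ are positive and $C>0$. For \eqref{2}: from the ODE, $\varphi''(r)-\frac{\varphi'(r)}{r}=-C-\left(\frac{A+1}{r}+B\right)\varphi'(r)\le -C$, again because $\varphi'>0$, $B\ge0$ and $A+1>0$; so \eqref{2} in fact holds with no further work. For \eqref{3}: having fixed $k$, the value $\varphi(\delta)=\varphi(0^+)+\int_0^\delta\psi$ is some fixed number, and we add the constant $\max\{0,D-\varphi(0^+)-\int_0^\delta\psi\}$ to $\varphi$ so that $\varphi(\delta)\ge D$; note this preserves \eqref{1}, \eqref{2} and positivity, and can only help in \eqref{4} up to a bounded amount near $r=0$ — wait, adding a constant does hurt \eqref{4} since $r^{A-1}\to\infty$; so instead I would choose $k$ first to control the growth rate and then accommodate $D$ by enlarging $k$ rather than by adding a constant. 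Concretely, from the explicit formula $\psi(r)\le k\,r^{-A}$ (dropping the nonpositive term and bounding $e^{-Br}\le1$), so $\varphi(r)-\varphi(0^+)=\int_0^r\psi\le \frac{k}{1-A}r^{1-A}$; thus if we set $\varphi(0^+)=0$ we get $\varphi(r)/r^{1-A}\le \frac{k}{1-A}$, giving \eqref{4}. Then to achieve \eqref{3} we increase $k$: the lower bound $\varphi(\delta)\ge \psi(\delta)\cdot 0$ is useless, so instead integrate the lower bound $\psi(r)\ge \frac{1}{r^A e^{Br}}\bigl(k-C\int_0^\delta s^A e^{Bs}ds\bigr)=:c(k)\,r^{-A}e^{-Br}$ with $c(k)\to\infty$ as $k\to\infty$; hence $\varphi(\delta)\ge c(k)\int_0^\delta r^{-A}e^{-Br}dr\to\infty$, so a large enough $k$ yields $\varphi(\delta)\ge D$ while \eqref{4} still holds with constant $\frac{k}{1-A}$.

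Finally I would record the regularity: the explicit formula shows $\psi=\varphi'\in C^1((0,\delta])$, so $\varphi\in C^2((0,\delta])$, and since $\psi$ is integrable on $(0,\delta]$ (as $r^{-A}$ is, because $A<1$) the function $\varphi$ extends continuously to $[0,\delta]$ with $\varphi(0)=0$; together with the chosen additive normalisation this gives $\varphi\in C^2((0,\delta])\cap C([0,\delta])$ and $\varphi>0$ on $(0,\delta]$ (strictly, since $\varphi'>0$ and $\varphi(0)=0$). The main obstacle — really the only delicate point — is the tension between \eqref{3} and \eqref{4}: a naive fix of \eqref{3} by adding a constant destroys the $r^{1-A}$ growth bound, so one must instead push the single parameter $k$, checking that increasing $k$ simultaneously makes $\varphi(\delta)$ large and keeps $\varphi(r)\le \frac{k}{1-A}r^{1-A}$, which is fine because the bound in \eqref{4} is allowed to depend on all of $A,B,C,D,\delta$. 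Everything else is a direct consequence of the ODE itself.
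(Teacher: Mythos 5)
Your proof is correct and follows essentially the same route as the paper: solve \eqref{0102eq2} explicitly via the integrating factor $r^Ae^{Br}$, set $\varphi(r)=\int_0^r\psi$, and read off \eqref{1}, \eqref{2}, \eqref{4} directly from the formula and the ODE. The only (harmless) difference is how the integration constant is fixed for \eqref{3}: the paper chooses $K$ so that $\psi(\delta)=D/\delta$ and uses that $\psi$ is decreasing to get $\varphi(\delta)\geq\delta\psi(\delta)=D$, whereas you simply take $k$ large, which works just as well and still gives a bound in \eqref{4} depending only on $A,B,C,D,\delta$.
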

\begin{proof}
By a straightforward computation, for any $K\in\mathbb R$ the function
$$
\varphi(r)=\int\limits_0^r\psi(s)\,ds\qquad\text{where}\qquad\psi(s)=\frac{e^{-Bs}}{s^A}\left(K-C\int\limits_0^st^Ae^{Bt}\,dt\right)
$$
is solution of \eqref{0102eq2}. Pick $K=K(A,B,C,D,\delta)$ such that $\psi(\delta)=\frac D\delta$. Hence $\varphi'(r)>0$ for $r\in(0,\delta]$ and, since $\varphi(0)=0$, then $\varphi(r)>0$ for any $r\in(0,\delta]$. Moreover,  just using the equation \eqref{0102eq2}, we infer that  \eqref{1} holds. Condition \eqref{2} easily follows by \eqref{1} and again using \eqref{0102eq2}.  Since $\psi$ is a decreasing function in $(0,\delta]$, then
$$
\varphi(\delta)=\int\limits_0^\delta\psi(s)\,ds\geq\delta\psi(\delta)=D
$$
by the choice of $K$. This shows \eqref{3}. To conclude it is sufficient to observe that for any $r\in(0,\delta]$ 
$$
\varphi(r)\leq K\int\limits_0^r\frac{1}{s^A}\,ds=\frac{K}{1-A}r^{1-A}.
$$
\end{proof}

\begin{proof}[Proof of Theorem \ref{teo}]
Take $\delta>0$ small enough such that $\omega_{2\delta}=\left\{x\in\mathbb R^N:\,\text{dist}(x,\omega)<2\delta\right\}\subset\omega'$. Fix $z\in\omega$ and let
$$
\Delta_z=\left\{(x,y)\in\Omega\times\Omega:\;|x-y|<\delta,\,|x-z|<\delta\right\}.
$$
Note that if $(x,y)\in\Delta_z$ then both $x$ and $y$ belongs in particular to $\omega'$. For $(x,y)\in\overline\Delta_z$ let
$$
\phi(x,y)=u(x)-u(y)-\varphi(|x-y|)-L|x-z|^2,
$$
where $\varphi(r)$ is, for $r\in[0,\delta]$, the function provided by Lemma \ref{lema} and depending on the parameters $A,B,C,D$.\\ Let $A=1-\beta\in(0,1)$. We claim that for an appropriate choice of $B,C,D$ and $L$, then 
\begin{equation}\label{2eq1}
\max_{(x,y)\in\overline\Delta_z}\phi(x,y)\leq0.
\end{equation}
This will implies the desired result, taking first $x=z$, then making $z$ vary and using \eqref{4}.

Set
\begin{equation}\label{2eq3}
\begin{split}
L&=\frac{2\left\|u\right\|_{L^\infty(\omega')}}{\delta^2}\\
D&=2\left\|u\right\|_{L^\infty(\omega')}\\
C&=\frac{2\left(L\left(|\mathbf{a}|_1+C_H\delta(1+2L\delta)\right)+\left\|f\right\|_{L^\infty(\omega')}+1\right)}{\left(\sqrt{a_1}+\sqrt{a_N}\right)^2}\\
B&=\frac{2L\delta C_H}{\left(\sqrt{a_1}+\sqrt{a_N}\right)^2}\,.
\end{split}
\end{equation}
By contradiction we suppose that \eqref{2eq1} does not hold. Let $(\hat x,\hat y)\in\overline\Delta_z$
such that 
\begin{equation}\label{2eq2}
\max_{(x,y)\in\overline\Delta_z}\phi(x,y)=\phi(\hat x,\hat y)>0
\end{equation}
By \eqref{2eq2} it is plain that $\hat x\neq\hat y$. Moreover, using \eqref{2eq3}, we exclude that $|\hat x-\hat y|=\delta$ or $|\hat x-z|=\delta$. Hence  $(\hat x,\hat y)\in\Delta_z$. By a standard result in theory of viscosity solutions, see \cite[Theorem 3.2 and Remark 3.8]{CIL}, for any $\varepsilon>0$ there exist matrices $X_\varepsilon,Y_\varepsilon\in\mathbb S^N$ such that
\begin{equation}\label{2eq4}
\left(\nabla\varphi(|\hat x-\hat y|)+2L(\hat x-z),X_\varepsilon+2LI\right)\in\overline{J}^{2,+} u(\hat x)
\end{equation}
\begin{equation}\label{2eq5}
\left(\nabla\varphi(|\hat x-\hat y|),Y_\varepsilon\right)\in\overline{J}^{2,-} u(\hat y)
\end{equation}
and 
\begin{equation}\label{2eq6}
\left(\begin{array}{cc} X_\varepsilon & 0\\
0 & -Y_\varepsilon\end{array}\right)\leq\left(\begin{array}{rr} \Theta_\varepsilon & -\Theta_\varepsilon\\ -\Theta_\varepsilon & \Theta_\varepsilon\end{array}\right),
\end{equation}
where $I\in \mathbb S^N$ is the identity matrix and $\Theta_\varepsilon\in \mathbb S^N$ is given by
$$
\Theta_\varepsilon=\varphi''(|\hat x-\hat y|)\left(1+2\varepsilon\varphi''(|\hat x-\hat y|)\right)P+\frac{\varphi'(|\hat x-\hat y|)}{|\hat x-\hat y|}\left(1+2\varepsilon\frac{\varphi'(|\hat x-\hat y|)}{|\hat x-\hat y|}\right)(I-P)
$$
where $P=\frac{\hat x-\hat y}{|\hat x-\hat y|}\otimes\frac{\hat x-\hat y}{|\hat x-\hat y|}$. Note that the eigenvalues of $\Theta_\varepsilon$ are
 $\varphi''(|\hat x-\hat y|)\left(1+2\varepsilon\varphi''(|\hat x-\hat y|)\right)$, which is simple, and $\frac{\varphi'(|\hat x-\hat y|)}{|\hat x-\hat y|}\left(1+2\varepsilon\frac{\varphi'(|\hat x-\hat y|)}{|\hat x-\hat y|}\right)$ with multiplicity $N-1$. In view of \eqref{2} we can assume that, for $\varepsilon$ sufficiently small, one has 
$$
\lambda_1(\Theta_\varepsilon)=\varphi''(|\hat x-\hat y|)\left(1+2\varepsilon\varphi''(|\hat x-\hat y|)\right)
$$
and that 
$$
\lambda_2(\Theta_\varepsilon)=\ldots=\lambda_{N}(\Theta_\varepsilon)=\frac{\varphi'(|\hat x-\hat y|)}{|\hat x-\hat y|}\left(1+2\varepsilon\frac{\varphi'(|\hat x-\hat y|)}{|\hat x-\hat y|}\right).
$$
Using \eqref{2eq4}-\eqref{2eq5} and the equation \eqref{mainequation} we then obtain
\begin{equation*}
\begin{split}
-2\left\|f\right\|_{L^\infty(\omega')}&\leq a_1\lambda_1(X_\varepsilon)+a_N\lambda_N(X_\varepsilon)-a_1\lambda_1(Y_\varepsilon)-a_N\lambda_N(Y_\varepsilon)\\
&\qquad+\sum_{i=2}^{N-1}a_i\left(\lambda_i(X_\varepsilon)-\lambda_i(Y_\varepsilon)\right)+2L|\mathbf{a}|_1\\
&\qquad+H\left(\nabla\varphi(|\hat x-\hat y|)+2L(\hat x-z)\right)-H\left(\nabla\varphi(|\hat x-\hat y|)\right)\,.
\end{split}
\end{equation*}
Since $X_\varepsilon\leq Y_\varepsilon$ and $a_i\geq0$, then using \eqref{assumptionH2} and \eqref{1} we have
\begin{equation}\label{2eq8}
\begin{split}
-2\left\|f\right\|_{L^\infty(\omega')}&\leq a_1\lambda_1(X_\varepsilon)+a_N\lambda_N(X_\varepsilon)-a_1\lambda_1(Y_\varepsilon)-a_N\lambda_N(Y_\varepsilon)\\
&\qquad+2L|\mathbf{a}|_1+2L\delta C_H(1+\varphi'(|\hat x-\hat y|)+2L\delta)\,.
\end{split}
\end{equation}
In order to reach a contradiction we now estimate the right hand side on \eqref{2eq8} using the inequality
\begin{equation}\label{2eq9}
\left(\begin{array}{cc} X_\varepsilon & 0\\
0 & -Y_\varepsilon\end{array}\right)\left(\begin{array}{c}v\\ w\end{array}\right)\cdot\left(\begin{array}{c}v\\ w\end{array}\right)\leq\left(\begin{array}{rr} \Theta_\varepsilon & -\Theta_\varepsilon\\ -\Theta_\varepsilon & \Theta_\varepsilon\end{array}\right)\left(\begin{array}{c}v\\ w\end{array}\right)\cdot\left(\begin{array}{c}v\\ w\end{array}\right)\quad\forall v,w\in\mathbb R^N
\end{equation}
and choosing in a suitable way $v,w$.\\
With the choice $$v=\sqrt{a_1}\,\frac{\hat x-\hat y}{|\hat x-\hat y|}\;,\quad w=-\sqrt{a_N}\,\frac{\hat x-\hat y}{|\hat x-\hat y|},$$ then \eqref{2eq9} yields
\begin{equation}\label{2eq10}
\begin{split}
a_1\lambda_1(X_\varepsilon)-a_N\lambda_N(Y_\varepsilon)&\leq a_1X_\varepsilon\frac{\hat x-\hat y}{|\hat x-\hat y|}\cdot\frac{\hat x-\hat y}{|\hat x-\hat y|}-a_NY_\varepsilon\frac{\hat x-\hat y}{|\hat x-\hat y|}\cdot\frac{\hat x-\hat y}{|\hat x-\hat y|}\\
&\leq\left(\sqrt{a_1}+\sqrt{a_N}\right)^2\Theta_\varepsilon\frac{\hat x-\hat y}{|\hat x-\hat y|}\cdot\frac{\hat x-\hat y}{|\hat x-\hat y|}\\
&=\left(\sqrt{a_1}+\sqrt{a_N}\right)^2\varphi''(|\hat x-\hat y|)\left(1+2\varepsilon\varphi''(|\hat x-\hat y|)\right).
\end{split}
\end{equation}
On the other hand, taking 
$$v=\sqrt{a_N}\,\xi\;,\quad w=0,$$
where $|\xi|=1$ and  $X_\varepsilon\xi=\lambda_N(X_\varepsilon)\xi$, we have
\begin{equation}\label{2eq11}
a_N\lambda_N(X_\varepsilon)\leq a_N\,\Theta_\varepsilon\xi\cdot\xi\leq a_N\frac{\varphi'(|\hat x-\hat y|)}{|\hat x-\hat y|}\left(1+2\varepsilon\frac{\varphi'(|\hat x-\hat y|)}{|\hat x-\hat y|}\right).
\end{equation}
In a similar way we also obtain that 
\begin{equation}\label{2eq12}
-a_1\lambda_1(Y_\varepsilon)\leq a_1\Theta_\varepsilon\xi\cdot\xi\leq a_1\frac{\varphi'(|\hat x-\hat y|)}{|\hat x-\hat y|}\left(1+2\varepsilon\frac{\varphi'(|\hat x-\hat y|)}{|\hat x-\hat y|}\right).
\end{equation}
Putting together \eqref{2eq8}, \eqref{2eq10}-\eqref{2eq12} we infer that 
\begin{equation}\label{2eq13}
\begin{split}
-2\left\|f\right\|_{L^\infty(\omega')}&\leq\left(\sqrt{a_1}+\sqrt{a_N}\right)^2\varphi''(|\hat x-\hat y|)+\left(\frac{a_1+a_N}{|\hat x-\hat y|}+2 L\delta C_H\right)\varphi'(|\hat x-\hat y|)\\&\qquad+2L\left(|\mathbf{a}|_1+C_H\delta(1+2L\delta)\right)\\&\qquad+2\varepsilon\left[\left(\sqrt{a_1}+\sqrt{a_N}\right)^2(\varphi''(|\hat x-\hat y|))^2+(a_1+a_N)\left(\frac{\varphi'(|\hat x-\hat y|)}{|\hat x-\hat y|}\right)^2\right]\,.
\end{split}
\end{equation}
By  \eqref{2eq3} and Lemma \ref{lema},  the function $\varphi(r)$ is solution, for $r\in(0,\delta]$, of the ordinary differential equation
\begin{equation}\label{2eq14}
\begin{split}
\left(\sqrt{a_1}+\sqrt{a_N}\right)^2\varphi''(r)&+\left(\frac{a_1+a_N}{r}+2 L\delta C_H\right)\varphi'(r)=\\&=-2\left(L\left(|\mathbf{a}|_1+C_H\delta(1+2L\delta)\right)+\left\|f\right\|_{L^\infty(\omega')}+1\right).
\end{split}
\end{equation}
Coupling \eqref{2eq13}-\eqref{2eq14} then
 $$
1\leq\varepsilon\left[\left(\sqrt{a_1}+\sqrt{a_N}\right)^2(\varphi''(|\hat x-\hat y|))^2+(a_1+a_N)\left(\frac{\varphi'(|\hat x-\hat y|)}{|\hat x-\hat y|}\right)^2\right]
$$  
 leading  a contradiction for $\varepsilon$ small enough.
\end{proof}

\begin{remark}
We note that in the case $H\equiv0$, the function $\varphi(r)$ used in the proof of Theorem \ref{teo} is more explicit, in fact it is given by $\varphi(r)=Ar^\beta-Br^2$ for a suitable choice of $A,B>0$.
\end{remark}

\subsection{Lack of regularity} Let $N\geq3$ and consider the equation
\begin{equation}\label{eq1}
a_2\lambda_2(D^2u)+\ldots+a_{N-1}\lambda_{N-1}(D^2u)=0\quad\text{in $B_1$}.
\end{equation}
We are going to exhibit a continuous function $u$ which is  solution of \eqref{eq1} for any $a_i\geq0$ and $i=2,\ldots,N-1$, but which does not belong to $C^{0,\alpha}_{\text{loc}}(B_1)$ for any possible choice of $\alpha\in(0,1]$.

Let $f:(-1,1)\mapsto\mathbb R$ be the function
$$
f(t)=\left\{
\begin{array}{cl}
\frac{1}{2-\log|t|} & \text{if $t\neq0$}\\
0 & \text{if $t=0$}
\end{array}
\right.
$$
and consider it as a function of $N$ variables just by setting $u(x)=f(x_1)$ for $x\in B_1$. It is clear that $u\in C(B_1)$ but $u\notin C^{0.\alpha}_{\text{loc}}(B_1)$ for any $\alpha\in(0,1]$. 
We claim that $u$ is a viscosity solution of \eqref{eq1}.

The function $u$ is smooth for $x\in B_1\backslash\left\{x\in B_1:\,x_1=0\right\}$ and $$D^2u(x)=\text{diag}(f''(x_1),0,\ldots,0).$$ Since $f''(t)\leq0$ for any $t\in(-1,1)\backslash\left\{0\right\}$, we infer that $u$ is in fact a classical solution of \eqref{eq1} in the set $B_1\backslash\left\{x\in B_1:\,x_1=0\right\}$.\\
Now we prove that $u$ satisfies (in viscosity sense) the equation \eqref{eq1} also in $\left\{x\in B_1:\,x_1=0\right\}$. 
For $x\in\mathbb R^N$ such that $x_1=0$ we adopt the notation $x=(0,x')$ with $x'\in\mathbb R^{N-1}$.\\ Let $x_0=(0,x'_0)\in B_1$. Since there are no test functions $\phi\in C^2(B_1)$ touching $u$ from above  at $x_0$,  we infer that $u$ is a viscosity subsolution of \eqref{eq1}.
As far as the supersolution property is concerned, it is sufficient to prove that if $\phi\in C^2(B_1)$ is  such that
\begin{equation}\label{eq2}
0=u(0,x'_0)=\phi(0,x'_0)\quad\text{and}\quad u(x)\geq\phi(x)\;\;\;\forall x\in B_1
\end{equation}
then $\lambda_{N-1}(D^2\phi(0,x'_0))\leq0$.\\
Set $\psi(x')=\phi(0,x')$ for $|x'|<1$. From \eqref{eq2} we deduce that $\psi(x')$ attains its maximum at $x'_0$. Hence
\begin{equation}\label{eq3}
\left\langle D^2\psi(x'_0)v,v\right\rangle\leq0\quad\;\forall v\in\mathbb R^{N-1}.
\end{equation}
Using the Courant-Fischer formula
$$
\lambda_{N-1}(D^2\phi(0,x'_0))=\min_{\dim W=N-1}\max_{\stackrel{w\in W}{|w|=1}}\left\langle D^2\phi(0,x'_0)w,w\right\rangle,
$$ 
with the particular choice of $W=\left\{\left(0,v\right):\,v\in\mathbb R^{N-1}\right\}$, and \eqref{eq3} we then obtain
$$
\lambda_{N-1}(D^2\phi(0,x'_0))\leq\max_{\stackrel{v\in\mathbb R^{N-1}}{|v|=1}}\left\langle D^2\psi(x'_0)v,v\right\rangle\leq0.
$$
This shows that $u(x)$ is a viscosity solution of \eqref{eq1}, for any  $a_i\geq0$ and $i=2,\ldots,N-1$.

\end{document}